\newtheorem{theorem}{Theorem}[section]
\newtheorem{lemma}[theorem]{Lemma}
\newtheorem{remark}[theorem]{Remark}
\numberwithin{equation}{section}
\def\R{{\mathbb R}}
\def\E{{{\mathbb E}\,}}
\def\N{{\mathbb N}}
\def\square{{\vcenter{\vbox{\hrule height.3pt
        \hbox{\vrule width.3pt height5pt \kern5pt
           \vrule width.3pt}
        \hrule height.3pt}}}}
\def\tlint{{- \kern-0.85em \int \kern-0.2em}}
\def\dlint{{- \kern-1.05em \int \kern-0.4em}}
\def \eref#1{\hbox{(\ref{#1})}}
\def \eref#1{\hbox{(\ref{#1})}}
\newenvironment{proof}[1][Proof]{\noindent\textit{#1.} }{\hfill \rule{0.5em}{0.5em}}
\begin{document}

\title{Derivatives of local times for some Gaussian fields II}
\date{\today}
\author{Minhao Hong and Fangjun Xu\thanks{M. Hong is partially supported by National Natural Science Foundation of China (Grant No.11871219). F. Xu is partially supported by National Natural Science Foundation of China (Grant No.11871219, No.11871220).} \\
}
\maketitle
\begin{abstract}
\noindent  Given a $(2,d)$-Gaussian field
\[
Z=\big\{ Z(t,s)= X^{H_1}_t -\widetilde{X}^{H_2}_s, s,t \ge 0\big\},
\]
where $X^{H_1}$ and $\widetilde{X}^{H_2}$ are independent $d$-dimensional centered Gaussian processes satisfying certain properties, we will give the necessary condition for existence of derivatives of the local time of $Z$.

\vskip.2cm \noindent {\it Keywords:} Gaussian fields, Derivatives of local time, Necessary condition.

\vskip.2cm \noindent {\it Subject Classification: Primary 60F25;
Secondary 60G15, 60G22.}
\end{abstract}

\section{Introduction}
Local times for Gaussian processes or fields are important in the probability theory. Recently, their derivatives received much attention, see, for example, \cite{jnp, hx} and references therein. In \cite{hx},  we consider derivatives of local time for a $(2,d)$-Gaussian field
\[
Z=\big\{ Z(t,s)= X^{H_1}_t -\widetilde{X}^{H_2}_s, s,t \ge 0\big\},
\]
where $X^{H_1}$ and $\widetilde{X}^{H_2}$ are independent processes from a class of $d$-dimensional centered Gaussian processes satisfying certain local nondeterminism property.  A sufficient condition for existence of derivatives of the local time of $Z$ was given. Then, under the condition, derivatives of the local time are shown to be H\"{o}lder continuous in both time and space variables. Moreover, under some mild assumption, the sufficient condition is necessary for existence of derivatives of the local time at the origin. However, when the location is not  the origin, the necessity of the condition is still open. To the best of our knowledge, for local times of Gaussian processes or fields, in most cases, one just gave sufficient conditions for existence of local times and their derivatives. Only in the case where the location is the origin, there are a few papers showing the necessity of the sufficient condition, see, \cite{no, wx} for local times, \cite{jnp} for the first derivative of local time and \cite{ghx, hx} for both local times and their derivatives. Moreover, the Gaussian processes in \cite{hx} can be Bifractional Brownian motions and Subfractional Brownian motions, which are more general than the fractional Brownian motions in \cite{no,wx,ghx,jnp}.

In this paper, when the location is not the origin, we will show that,  if the Gaussian processes in \cite{hx} satisfying some additional properties,  then the sufficient condition $\frac{H_1H_2}{H_1+H_2}(2|\mathbf{k}|+d)<1$ is necessary. Moreover, when the location is the origin and under certain additional assumption, we could give a shorter proof for the necessity of the sufficient condition $\frac{H_1H_2}{H_1+H_2}(2|\mathbf{k}|+d)<1$. To make our results as general as possible, we only consider the necessary condition for existence of derivatives of local times here and would pose less restrictions on the Gaussian processes.  For $H\in (0,1)$,  let $X^H= \{ X^H_t:\, t\geq 0\} $ be a $d$-dimensional centered Gaussian stochastic process whose components $X^{H,\ell} (1\le \ell \le d)$ are independent and identically distributed, and satisfy the following property:

{\bf (P1)}    {\it Bounds on the second moment of increments}: for any $T>0$, there exist two positive constants $\kappa_{T,1,H}\leq \kappa_{T,2,H}$ depending only on $T$ and $H$, such that for any $0\leq s<t<T$,
\begin{align*}
\kappa_{T,1,H}(t-s)^{2H}\leq \E\Big[(X^{H,\ell}_{t} -X^{H,\ell}_{s})^2\Big]\leq \kappa_{T,2,H}(t-s)^{2H}.
\end{align*}
Let $G^d_{1}$ be the class of all such $d$-dimensional centered Gaussian processes and $G^d_{1,2}$ the class of $d$-dimensional centered Gaussian processes in $G^d_{1}$ possessing the additional property:

{\bf (P2)}    {\it Bounds on the covariance of increments on disjoint intervals}:   there exists a nonnegative decreasing function $\beta(\gamma): (1,\infty) \rightarrow \mathbb{R}$ with $\lim\limits_{\gamma\to\infty}\beta(\gamma)=0$, such that, for any $0<s<t<T$ with $\frac{t-s}{s}\leq \frac{1}{\gamma}$,
 \[
\Big|\E\big[(X^{H,\ell}_{t}-X^{H,\ell}_{s})X^{H,\ell}_{s}\big]\Big|\leq  \beta(\gamma)\, \Big[\E\big(X^{H,\ell}_{t}-X^{H,\ell}_{s}\big)^2\Big]^{\frac{1}{2}}\Big[\E\big(X^{H,\ell}_{s}\big)^2 \Big]^{\frac{1}{2}}.
\]
According to results in \cite{bgt, hv, sxy}, we can easily see that the following $d$-dimensional Gaussian processes are in $G^d_{1,2}$.

\noindent
(i) {\it Bifractional Brownian motion (bi-fBm)}. The covariance function for components of this process is given by
\[
\E(X^{H,\ell}_t X^{H,\ell}_s)=2^{-K_0}\big[(t^{2H_0}+s^{2H_0})^{K_0}-|t-s|^{2H_0K_0}\big],
\]
where $H_0\in(0,1)$ and $K_0\in(0,1]$.  Here $K_0=1$ gives the fractional Brownian motion (fBm) with Hurst parameter $H=H_0$.

\noindent
(ii) {\it Subfractional Brownian motion (sub-fBm)}.  The covariance function for components of this process  is given by
\[
\E(X^{H,\ell}_t X^{H,\ell}_s)=t^{2H}+s^{2H}-\frac{1}{2}\big[(t+s)^{2H}+|t-s|^{2H}\big],
\]
where $H\in(0,1)$.

Let $X^{H_1}$ and $\widetilde{X}^{H_2}$ be independent Gaussian processes in $G^d_1$ with parameters $H_1, H_2\in (0,1)$, respectively. Then
\begin{align}
Z=\big\{Z(t,s)= X^{H_1}_t -\widetilde{X}^{H_2}_s, s,t \ge 0\big\}  \label{gf}
\end{align}
is a $(2,d)$-Gaussian field.

For any $\varepsilon>0$ and the multi-index $\mathbf{k}=(k_1,\cdots, k_d)$ with all $k_i$ being nonnegative integers, let
\[
p^{(\mathbf{k})}_{\varepsilon}(x)=\frac{\partial^\mathbf{k}}{\partial x^{k_1}_1\cdots \partial x^{k_d}_d}p_{\varepsilon}(x) =\frac{\iota^{|\mathbf{k}|}}{(2\pi)^d}\int_{\R^d} \Big(\prod^d_{i=1}y^{k_i}_i\Big)\, e^{\iota y\cdot x}e^{-\frac{\varepsilon|y|^2}{2}}\, dy,
\]
where $p_{\varepsilon}(x)=\frac{1}{(2\pi \varepsilon)^{\frac{d}{2}}} e^{-\frac{|x|^2}{2\varepsilon}}$ and $|\mathbf{k}|=\sum\limits^d_{i=1}k_i$.

For any $T>0$ and $x\in\R^d$, if
\begin{align}\label{epsilon}
L^{(\mathbf{k})}_{\varepsilon}(T,x):=\int^T_0\int^T_0 p^{(\mathbf{k})}_{\varepsilon}(X^{H_1}_t-\widetilde{X}^{H_2}_s+x)\, ds\, dt
\end{align}
converges to some random variable in $L^2$  when $\varepsilon\downarrow 0$, we denote the limit by $L^{(\mathbf{k})}(T,x)$ and call it  the $\mathbf{k}$-th derivative of local time for the $(2,d)$-Gaussian field $Z$.  If it exists,  $L^{(\mathbf{k})}(T,x)$ admits the following $L^2$-representation
\begin{align} \label{dlt}
L^{(\mathbf{k})}(T,x)=\int^T_0\int^T_0 \delta^{({\bf k})}(X^{H_1}_t-\widetilde{X}^{H_2}_s+x)\, ds\, dt.
\end{align}

The following are main results of this paper.
\begin{theorem}\label{thm1}
Assume that $X^{H_1}=\{X^{H_1}_t:\, t\geq 0\}$ and $\widetilde{X}^{H_2}=\{\widetilde{X}^{H_2}_t:\, t\geq 0\}$ are two independent Gaussian processes in $G^d_{1,2}$ with parameters $H_1, H_2\in(0,1)$, respectively.  For any $x\neq 0$, if $\frac{H_1H_2}{H_1+H_2}(2|\mathbf{k}|+d)\geq 1$, then there exist positive constants $c_1$ and $c_2$ such that
\begin{align*}
\liminf_{\varepsilon\downarrow 0}\frac{\E[|L^{(\mathbf{k})}_{\varepsilon}(T,x)|^2]}{h^{d,|{\bf k}|}_{H_1,H_2}(\varepsilon)}\geq c_1e^{-c_2|x|^2},
\end{align*}
where
\begin{align} \label{rate}
h^{d,|{\bf k}|}_{H_1,H_2}(\varepsilon)=
\left\{\begin{array}{ll}
\varepsilon^{\frac{H_1+H_2}{2H_1H_2}-\frac{d}{2}-|\mathbf{k}|} &   \text{if}\; \frac{H_1H_2}{H_1+H_2}(2|\mathbf{k}|+d)>1\\  \\
\ln(1+\varepsilon^{-\frac{1}{2}}) &   \text{if}\; \frac{H_1H_2}{H_1+H_2}(2|\mathbf{k}|+d)=1.
\end{array} \right.
\end{align}
\end{theorem}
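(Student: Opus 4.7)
The plan is to start from the Fourier representation of $p_\varepsilon^{(\mathbf{k})}$ provided in the excerpt, yielding after Fubini
\[
\E[|L^{(\mathbf{k})}_\varepsilon(T,x)|^2] = \frac{(-1)^{|\mathbf{k}|}}{(2\pi)^{2d}}\int_{[0,T]^4}\!\!\int_{\R^{2d}} y^{\mathbf{k}} z^{\mathbf{k}} e^{\iota(y+z)\cdot x} e^{-\varepsilon(|y|^2+|z|^2)/2}\,\E\bigl[e^{\iota y\cdot Z(t,s)+\iota z\cdot Z(t',s')}\bigr]\, dy\,dz\,dt\,ds\,dt'\,ds'.
\]
The Gaussian expectation factorises across the $d$ iid coordinates as $\prod_\ell\exp(-\tfrac12(y_\ell,z_\ell)M(y_\ell,z_\ell)^T)$, where $M$ is the $2\times 2$ sum $\Sigma_X+\Sigma_{\widetilde X}+\varepsilon I$ of the time-covariance matrices of $(X^{H_1,\ell}_t,X^{H_1,\ell}_{t'})$ and $(\widetilde X^{H_2,\ell}_s,\widetilde X^{H_2,\ell}_{s'})$. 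I would then restrict the time integration to a small diagonal region $t,t'\in[T_0,T_0+\eta]$ and $s,s'\in[S_0,S_0+\eta]$ with $T_0,S_0>0$ fixed, choosing $\eta$ small enough that property (P2) makes $|\gamma_{t,t'}|:=|\E[X^{H_1,\ell}_t(X^{H_1,\ell}_{t'}-X^{H_1,\ell}_t)]|\le \beta\,\delta_X(t,t')\sigma_t$, and its tilde analogue for $\widetilde X$, with $\beta\ll 1$.

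Next I perform the unimodular change $(y_\ell,z_\ell)\mapsto(u_\ell,v_\ell)=(y_\ell+z_\ell,z_\ell)$, which collapses the phase to $e^{\iota u\cdot x}$ and recasts the per-coordinate quadratic form as $\alpha u_\ell^2+2\mu u_\ell v_\ell+B v_\ell^2$, where
\[
\alpha=\sigma_t^2+\widetilde\sigma_s^2+\varepsilon,\quad B=\delta_X^2(t,t')+\widetilde\delta_{\widetilde X}^2(s,s')+2\varepsilon,\quad \mu=\gamma_{t,t'}+\widetilde\gamma_{s,s'}-\varepsilon,
\]
with $\mu^2\le c\beta^2\alpha B$ on the chosen region by (P2). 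Completing the square in $v$ and performing the $v$-integral, the dominant contribution as $B\to 0$ is the top Gaussian moment $(2k_\ell-1)!!B^{-k_\ell}\sqrt{2\pi/B}$, carrying a sign $(-1)^{k_\ell}$ from the $(-w)^{k_\ell}w^{k_\ell}$ term of the expanded polynomial $((1+\mu/B)u-w)^{k_\ell}(w-(\mu/B)u)^{k_\ell}$. The remaining $u$-integral is a single Gaussian with phase and evaluates to $\sqrt{2\pi/\alpha'}\,e^{-x_\ell^2/(2\alpha')}$ with $\alpha':=\alpha-\mu^2/B\ge(1-c\beta^2)\alpha$. Multiplied over $\ell$, the coordinate-wise signs combine to $(-1)^{|\mathbf{k}|}$ and cancel against the prefactor in the Fourier formula, leaving a strictly positive leading contribution $C_{\mathbf{k}}\,e^{-|x|^2/(2\alpha')}/[B^{|\mathbf{k}|}(\det M)^{d/2}]$ with $\det M=\alpha B-\mu^2$ comparable to $\alpha B$.

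Finally, since $\alpha'$ is bounded above on the region, $e^{-|x|^2/(2\alpha')}\ge e^{-c_2|x|^2}$, and substituting $r=t'-t$, $r'=s'-s$ the time integral reduces up to constants to $\iint_0^\eta(\varepsilon+r^{2H_1}+r'^{2H_2})^{-(|\mathbf{k}|+d/2)}\,dr\,dr'$. The scaling $r=\varepsilon^{1/(2H_1)}R$, $r'=\varepsilon^{1/(2H_2)}R'$ shows this integral is of order $h^{d,|\mathbf{k}|}_{H_1,H_2}(\varepsilon)$ under the hypothesis $\tfrac{H_1H_2}{H_1+H_2}(2|\mathbf{k}|+d)\ge 1$, with the logarithm appearing in the critical case. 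The main obstacle is promoting the heuristic \emph{leading order} Gaussian-moment computation in the second paragraph to a genuine one-sided lower bound: the $v$-integral also produces sub-leading polynomial-in-$u$ terms of uncontrolled sign, and $\mu$ couples $u$ and $v$. Handling both requires quantitative use of (P2), choosing $\eta$ (hence the separation parameter in (P2)) so that $\beta$ is small enough that the positive main term strictly dominates all correction terms uniformly over the whole selected region.
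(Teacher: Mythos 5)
Your core computation coincides with the paper's: the Fourier representation, factorization over the $d$ coordinates, the unimodular change of variables to increment coordinates, the identification of the positive top Gaussian moment $(2k_i-1)!!\,B^{-k_i-1/2}$ as the leading term with the signs cancelling, the use of (P2) to make the cross term $\mu$ small relative to $\sqrt{\alpha B}$, and the scaling substitution producing $h^{d,|\mathbf{k}|}_{H_1,H_2}(\varepsilon)$ are exactly what Lemmas \ref{lma1}--\ref{lma2} and Steps 1--2 of the paper's proof carry out (the lemmas give the $v$-then-$u$ integration as an exact finite sum, which is how the paper turns your ``leading order'' heuristic into the one-sided bound you correctly identify as the main obstacle).

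There is, however, a genuine gap one step earlier: you restrict the time integration to the box $[T_0,T_0+\eta]^2\times[S_0,S_0+\eta]^2$ at the outset and lower-bound only that piece. This is not a valid lower bound for $\E[|L^{(\mathbf{k})}_\varepsilon(T,x)|^2]$, because for $x\neq 0$ and $|\mathbf{k}|\ge 1$ the integrand $\prod_i F(\cdot,x_i)$ is not pointwise nonnegative (each factor is a mixed $2k_i$-th derivative of a bivariate Gaussian density evaluated on the diagonal, which changes sign), and the complement of your box still contains a full neighbourhood of the diagonal $\{t=t',\,s=s'\}$, where $B=\delta_X^2+\widetilde\delta_{\widetilde X}^2+2\varepsilon\downarrow 2\varepsilon$. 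That part of the complement contributes at the same order $h^{d,|\mathbf{k}|}_{H_1,H_2}(\varepsilon)$ as your main term, with a sign you have not controlled, so it could in principle cancel your lower bound. The paper's proof is organized precisely around this point: it bounds $\E[(X^{H_1,1}_{t_2}-X^{H_1,1}_{t_1})X^{H_1,1}_{t_1}]$ on \emph{all} of $D$ by a three-case analysis (according to whether $\tfrac{t_2-t_1}{t_1}$ is below $\tfrac{1}{\gamma}$, between $\tfrac{1}{\gamma}$ and $\gamma$, or above $\gamma$); it splits the ordered domain into the aligned term $I_1(\varepsilon)$ and the crossed term $I_2(\varepsilon)$ --- note that your single unordered parametrization hides the crossed configuration, in which $\mu$ picks up an extra $-\widetilde\delta^2$ contribution so that your claimed bound $|\gamma_{t,t'}|\le\beta\,\delta_X\sigma_t$ is not a direct consequence of (P2); it proves $I_1\ge(1-c\,\beta(\gamma))^d\int_D(\text{positive integrand})\,dt\,ds$ up to lower-order terms over the whole of $D$ \emph{before} restricting the now-nonnegative integrand to $D_\gamma$ of (\ref{dg}); and it shows $I_2\ge -c\,\beta(\gamma)\,h^{d,|\mathbf{k}|}_{H_1,H_2}(\varepsilon)$, sending $\gamma\to\infty$ only at the end. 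Without some such global control of the sign of the integrand off your box, the argument does not close.
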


\begin{theorem}\label{thm2}
Assume that $X^{H_1}=\{X^{H_1}_t:\, t\geq 0\}$ and $\widetilde{X}^{H_2}=\{\widetilde{X}^{H_2}_t:\, t\geq 0\}$ are two independent Gaussian processes in $G^d_{1}$ with parameters $H_1, H_2\in(0,1)$, respectively. We further assume that  (i) $|\mathbf{k}|$ is even or  (ii) $\E[X^{H_1,1}_{t}X^{H_1,1}_{s}]\geq 0$ and  $\E[\widetilde{X}^{H_2,1}_{t}\widetilde{X}^{H_2,1}_{s}]\geq 0$ for any  $0<s,t<T$. Then, if $\frac{H_1H_2}{H_1+H_2}(2|\mathbf{k}|+d)\geq 1$, there exists a positive constant $c_3$ such that
\[
\liminf\limits_{\varepsilon\downarrow 0}\frac{\E[|L^{(\mathbf{k})}_{\varepsilon}(T,0)|^2]}{h^{d,|{\bf k}|}_{H_1,H_2}(\varepsilon)}\geq c_3.
\]
\end{theorem}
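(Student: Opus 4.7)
The approach is to reduce $\E[|L^{(\mathbf{k})}_\varepsilon(T,0)|^2]$ to an explicit integral over $[0,T]^4$ whose integrand, under (i) or (ii), is pointwise nonnegative and admits a single-term Isserlis lower bound, and then to conclude by a scaling argument using only (P1). Substituting the Fourier representation of $p^{(\mathbf{k})}_\varepsilon$ in (\ref{epsilon}), taking expectation, and using that the $d$ coordinates of $X^{H_1}$ and $\widetilde X^{H_2}$ are i.i.d.\ to factorize the resulting Gaussian integral on $\R^{2d}$ over coordinate directions, one obtains
\begin{align*}
\E[|L^{(\mathbf{k})}_\varepsilon(T,0)|^2] = \frac{(-1)^{|\mathbf{k}|}}{(2\pi)^d}\int_{[0,T]^4}\frac{1}{D^{d/2}}\prod_{i=1}^d \E[Y^{k_i} Z^{k_i}]\, ds\, dt\, ds'\, dt',
\end{align*}
with $\widetilde A=\sigma^2_{H_1,t}+\sigma^2_{H_2,s}+\varepsilon$, $\widetilde B=\sigma^2_{H_1,t'}+\sigma^2_{H_2,s'}+\varepsilon$, $C=\E[X^{H_1,1}_tX^{H_1,1}_{t'}]+\E[\widetilde X^{H_2,1}_s\widetilde X^{H_2,1}_{s'}]$, $D=\widetilde A\widetilde B-C^2$, and $(Y,Z)$ a centered Gaussian pair with covariances $\E[Y^2]=\widetilde B/D,\ \E[Z^2]=\widetilde A/D,\ \E[YZ]=-C/D$.

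Isserlis' theorem writes $\E[Y^{k_i} Z^{k_i}]$ as a positive-coefficient sum of terms $(\widetilde B/D)^{(k_i-m)/2}(\widetilde A/D)^{(k_i-m)/2}(-C/D)^m$ over $m\equiv k_i\pmod{2}$, so every summand shares the common sign $(-\sgn(C))^{k_i}$. Multiplying across the $d$ factors together with the external $(-1)^{|\mathbf{k}|}$ produces an overall sign $(\sgn(C))^{|\mathbf{k}|}$, which is $\geq 0$ under hypothesis (i) (since $|\mathbf{k}|$ is even) and under hypothesis (ii) (since $C\geq 0$). Hence the integrand is nonnegative on all of $[0,T]^4$; keeping only the maximal-coupling term $m_i=k_i$ in each Isserlis sum yields
\begin{align*}
\E[|L^{(\mathbf{k})}_\varepsilon(T,0)|^2] \geq \frac{\prod_i k_i!}{(2\pi)^d}\int_{[0,T]^4}\frac{|C|^{|\mathbf{k}|}}{D^{d/2+|\mathbf{k}|}}\, ds\, dt\, ds'\, dt'.
\end{align*}

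Restricting to $R=\{t,s\in[T/4,T/2],\ 0<u\equiv t'-t\leq\delta,\ 0<v\equiv s'-s\leq\delta\}$ with $\delta$ small, property (P1) forces $|C|\geq c_0>0$ on $R$ and gives the key upper bound $D\leq K(u^{2H_1}+v^{2H_2}+\varepsilon)$. The latter follows from decomposing $\widetilde A\widetilde B-C^2$ into the two single-process determinants $ab-c^2,\ \widetilde a\widetilde b-\widetilde c^2$ (with $a,b,c$ and $\widetilde a,\widetilde b,\widetilde c$ denoting the variances/covariances of $X^{H_1}_t,X^{H_1}_{t'}$ and of $\widetilde X^{H_2}_s,\widetilde X^{H_2}_{s'}$ respectively) plus a cross term, which in turn can be rewritten as $(a-c)(\widetilde b-\widetilde c)+(b-c)(\widetilde a-\widetilde c)+\widetilde c\,\Delta^2_{H_1,u}+c\,\Delta^2_{H_2,v}$, and then bounding each piece by $O(u^{2H_1}+v^{2H_2})$ via Cauchy--Schwarz and (P1). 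The scaling $u=\varepsilon^{1/(2H_1)}u_1$, $v=\varepsilon^{1/(2H_2)}v_1$ extracts the factor $\varepsilon^{(H_1+H_2)/(2H_1H_2)-d/2-|\mathbf{k}|}$, while the remaining truncated integral of kernel $(u_1^{2H_1}+v_1^{2H_2}+1)^{-d/2-|\mathbf{k}|}$ over $(0,\delta\varepsilon^{-1/(2H_1)})\times(0,\delta\varepsilon^{-1/(2H_2)})$ tends to a positive finite constant when $\frac{H_1H_2}{H_1+H_2}(2|\mathbf{k}|+d)>1$ and diverges like $\log(1/\varepsilon)$ in the critical case; both behaviors match $h^{d,|\mathbf{k}|}_{H_1,H_2}(\varepsilon)$.

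The main obstacle is the global sign bookkeeping: hypotheses (i) and (ii) are exactly what is needed so that every Isserlis contraction contributes with the same sign, allowing the trivial lower bound by a single term to survive integration over the whole of $[0,T]^4$. Without either of them, opposite-sign contractions could cause cancellations and a much more delicate argument, akin to the one required for the $x\neq 0$ case of Theorem~\ref{thm1}, would be needed.
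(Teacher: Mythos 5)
Your proposal is correct and follows essentially the same route as the paper's proof: reduce $\E[|L^{(\mathbf{k})}_\varepsilon(T,0)|^2]$ to a product of one-dimensional Gaussian moments, use hypothesis (i) or (ii) to see that all contractions carry the same sign so that a single term $\propto C^{|\mathbf{k}|}/D^{(2|\mathbf{k}|+d)/2}$ gives a valid lower bound, restrict to a near-diagonal region where $C$ is bounded below and $D\lesssim (t'-t)^{2H_1}+(s'-s)^{2H_2}+\varepsilon$, and finish by scaling. The only cosmetic differences are that you invoke Isserlis' theorem where the paper uses its explicit Lemma~\ref{lma3} (the same expansion, regrouped in powers of $b$ and the determinant, so your coefficient $k_i!$ versus the paper's $(2k_i-1)!!$ is immaterial), and that you carry out the final change of variables explicitly rather than citing Lemma A.3 of \cite{hx}.
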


\begin{remark} For independent Gaussian processes $X^{H_1}$ and $\widetilde{X}^{H_2}$ in $G^d_{L}$ as defined in \cite{hx}, if they are also in $G^d_{1,2}$,  then, for $x\neq 0$, $L^{(\mathbf{k})}(T,x)$ exists in $L^2$ if and only if $\frac{H_1H_2}{H_1+H_2}(2|\mathbf{k}|+d)<1$.
\end{remark}

\begin{remark} For independent Gaussian processes $X^{H_1}$ and $\widetilde{X}^{H_2}$ in $G^d_{L}$ as defined in \cite{hx}, if they also satisfy the assumptions in Theorem \ref{thm2},  then $L^{(\mathbf{k})}(T,0)$ exists in $L^2$ if and only if $\frac{H_1H_2}{H_1+H_2}(2|\mathbf{k}|+d)<1$. Moreover, the definition of $d$-dimensional bi-fBm implies that its components have nonnegative covariance function. The non-negativity of covariance function for components of sub-fBm follows from \cite{bgt}.
\end{remark}

\begin{remark}
For each $N\in\N$, define the $(N, d)$-Gaussian field
\[
 Z^{N}=\Big\{\sum^{N}_{j=1} X^{j,H_j}_{t_j}:\; t_j\geq 0,\, j=1,\dots,N\Big\},
\]
where $X^{j,H_j}_{t_j}$ are independent $d$-dimensional Gaussian processes in $G^d_{1}$. Replace $Z$ in $L^{(\mathbf{k})}_{\varepsilon}(T,x)$ and $L^{(\mathbf{k})}(T,x)$ by $Z^{N}$ and denote the new terms by $L^{(\mathbf{k})}_{N, \varepsilon}(T,x)$ and $L^{(\mathbf{k})}_{N}(T,x)$, respectively. Using the methodologies developed here, we could obtain similar results for the existence of $L^{(\mathbf{k})}_{N}(T,x)$ in $L^2$. That is,
\begin{enumerate}
\item[(i)] Assume that $X^{j,H_j}_{t_j} (j=1,\dots,N)$ are independent $d$-dimensional Gaussian processes in $G^d_{1,2}$, $x\neq 0$ and $2|\mathbf{k}|+d\geq \sum\limits^N_{j=1} \frac{1}{H_j}$.  Then there exist positive constants $c_4$ and $c_5$ such that
    \[
\liminf\limits_{\varepsilon\downarrow 0}\frac{\E[|L^{(\mathbf{k})}_{N,\varepsilon}(T,x)|^2]}{h^{d,|{\bf k}|}_{H_1,H_2,\dots, H_N}(\varepsilon)}\geq c_4e^{-c_5|x|^2},
\] where $h^{d,|{\bf k}|}_{H_1,H_2,\dots, H_N}(\varepsilon)=
\left\{\begin{array}{ll}
\varepsilon^{\sum\limits^N_{j=1}\frac{1}{2H_j}-\frac{d}{2}-|\mathbf{k}|} &   \text{if}\;\; 2|\mathbf{k}|+d>\sum\limits^N_{j=1} \frac{1}{H_j}\\
\ln(1+\varepsilon^{-\frac{1}{2}}) &   \text{if}\;\; 2|\mathbf{k}|+d=\sum\limits^N_{j=1} \frac{1}{H_j}.
\end{array} \right.$

\item[(ii)] Assume that $X^{j,H_j}_{t_j} (j=1,\dots,N)$ are independent $d$-dimensional Gaussian processes in $G^d_{1}$, $|\mathbf{k}|$ is even or all $X^{j,H_j}$ have nonnegative covariance functions, and $2|\mathbf{k}|+d\geq \sum\limits^N_{j=1} \frac{1}{H_j}$. Then there exists a positive constant $c_6$ such that $
\liminf\limits_{\varepsilon\downarrow 0}\frac{\E[|L^{(\mathbf{k})}_{N,\varepsilon}(T,0)|^2]}{h^{d,|{\bf k}|}_{H_1,H_2,\dots, H_N}(\varepsilon)}\geq c_6.$
\end{enumerate}
The case $N=1$ is easy. It follows from simplified proofs of Theorems \ref{thm1} and \ref{thm2}.  Moreover, our methodologies also work for derivatives of local times of L\'{e}vy processes or fields.
\end{remark}

After some preliminaries in Section 2, Sections 3 and 4 are devoted to the proofs of Theorems \ref{thm1} and \ref{thm2}, respectively. Throughout this paper, if not mentioned otherwise, the letter $c$, with or without a subscript, denotes a generic positive finite constant whose exact value may change from line to line.  For any $x,y\in\R^d$, we use $x\cdot y$ to denote the usual inner product  and $|x|=(\sum\limits^d_{i=1}|x_i|^2)^{1/2}$.  Moreover, we use $\iota$ to denote $\sqrt{-1}$.

\section{Preliminaries}

In this section, we give three lemmas. The first two will be used in the proof of Theorem \ref{thm1} and the last one in the proof of Theorem \ref{thm2}.
\begin{lemma} \label{lma1} Assume that $k\in\N\cup\{0\}$ and $\varepsilon>0$. Then, for any $a,b,c, x\in\R$ with $a>0$, $c>0$ and $\Delta=c+\varepsilon-\frac{(b-\varepsilon)^2}{a+2\varepsilon}>0$, we have
\begin{align} \label{integral}
&\frac{(-1)^k}{2\pi}\int_{\R^2} \exp\Big\{-\frac{1}{2}(y_2^2a+2y_2 y_1b+y_1^2c)-\frac{\varepsilon}{2}((y_1-y_2)^2+y_2^2)+\iota y_1x\Big\}  y^{k}_2 (y_1-y_2)^{k} \, dy \nonumber \\
&=\sum^k_{\ell=0}\sum^{k+\ell}_{m=0:\text{even}}\sum^{2k-m}_{n=0:\text{even}} c_{k,\ell,m,n}  (a+2\varepsilon)^{-\frac{m+1}{2}} (\frac{\varepsilon-b}{a+2\varepsilon})^{k+\ell-m}\Delta^{-(2k-m)-\frac{1-n}{2}} x^{2k-m-n}e^{-\frac{x^2}{2\Delta}},
\end{align}
where
\[
c_{k,\ell,m,n}=(-1)^{\ell-\frac{m+n}{2}} \binom{k}{\ell}  \binom{k+\ell} {m}\binom{2k-m} {n}(m-1)!! (n-1)!!
\]
and we use the convention $0^0=1$ for the case $x=0\in\R$.
\end{lemma}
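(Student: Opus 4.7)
The plan is to reduce the two-dimensional integral to a product of two one-dimensional Gaussian integrals by diagonalising the quadratic form in the exponent, and then to expand the polynomial factor $y_2^k(y_1-y_2)^k$ so that each resulting integral is either a Gaussian moment or a Hermite-type Fourier integral.

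First I would rewrite the exponent as $-\tfrac{1}{2}(a+2\varepsilon)y_2^2 - (b-\varepsilon)y_1 y_2 - \tfrac{1}{2}(c+\varepsilon)y_1^2 + \iota y_1 x$, and then complete the square in $y_2$: setting $\lambda = (\varepsilon-b)/(a+2\varepsilon)$ and substituting $u = y_2 - \lambda y_1$ (unit Jacobian) turns it into $-\tfrac{1}{2}(a+2\varepsilon)u^2 - \tfrac{\Delta}{2}y_1^2 + \iota y_1 x$. The assumptions $a+2\varepsilon>0$ and $\Delta>0$ ensure the Gaussian in $(u,y_1)$ factors as an independent product.

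Next I would rewrite the polynomial factor in the new variables. Using $(y_1-y_2)^k = \sum_{\ell=0}^k(-1)^\ell\binom{k}{\ell}y_1^{k-\ell}y_2^\ell$ and then $y_2^{k+\ell} = (u+\lambda y_1)^{k+\ell} = \sum_{m=0}^{k+\ell}\binom{k+\ell}{m}\lambda^{k+\ell-m}u^m y_1^{k+\ell-m}$ yields
\[
y_2^k(y_1-y_2)^k = \sum_{\ell=0}^k \sum_{m=0}^{k+\ell}(-1)^\ell\binom{k}{\ell}\binom{k+\ell}{m}\lambda^{k+\ell-m}u^m y_1^{2k-m},
\]
which already exhibits the factors $\binom{k}{\ell}\binom{k+\ell}{m}$ and $\lambda^{k+\ell-m}$ appearing on the right-hand side of \eqref{integral}. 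The $u$-integral is then a plain Gaussian moment: it vanishes for odd $m$ (hence the restriction to even $m$) and equals $\sqrt{2\pi/(a+2\varepsilon)}\,(m-1)!!\,(a+2\varepsilon)^{-m/2}$ for even $m$, producing the block $(a+2\varepsilon)^{-(m+1)/2}(m-1)!!$ once the $\sqrt{2\pi}$ is absorbed. For the $y_1$-integral I would write
\[
\int_\R y_1^{2k-m} e^{-\Delta y_1^2/2 + \iota y_1 x}\,dy_1 = \iota^{-(2k-m)}\partial_x^{2k-m}\bigl[\sqrt{2\pi/\Delta}\,e^{-x^2/(2\Delta)}\bigr]
\]
and invoke Rodrigues' formula for the probabilist's Hermite polynomial $H_{2k-m}(z) = \sum_{n \text{ even}}(-1)^{n/2}\binom{2k-m}{n}(n-1)!!\,z^{2k-m-n}$ with $z = x/\sqrt{\Delta}$ to generate the inner sum over even $n$ and the factors $\binom{2k-m}{n}(n-1)!!\,\Delta^{-(2k-m)-(1-n)/2}\,x^{2k-m-n}e^{-x^2/(2\Delta)}$. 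The two $\sqrt{2\pi}$'s cancel the $1/(2\pi)$ prefactor, leaving the overall $(a+2\varepsilon)^{-1/2}\Delta^{-1/2}$ that combines with the moment exponents to produce $-(m+1)/2$ and $-(2k-m)-(1-n)/2$ as required.

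The main (and essentially only) obstacle is the sign bookkeeping. Four sign sources need to be gathered: the prefactor $(-1)^k$, the $(-1)^\ell$ from the binomial expansion of $(y_1-y_2)^k$, the factor $\iota^{-(2k-m)}=(-1)^{(2k-m)/2}$ from Fourier differentiation (using that $2k-m$ is even once $m$ is), and the Hermite sign $(-1)^{n/2}$. Exploiting $m,n\in 2\Z$ together with $(-1)^{2k}=1$ and $(-1)^{m+n}=1$, their product simplifies to $(-1)^{\ell-(m+n)/2}$, matching $c_{k,\ell,m,n}$; the convention $0^0=1$ is exactly what allows the formula to remain valid at $x=0$ through the terms with $2k-m-n=0$.
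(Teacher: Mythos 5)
Your proposal is correct and follows essentially the same route as the paper's own (very terse) computation: rewrite the exponent, expand $(y_1-y_2)^k$ binomially, complete the square in $y_2$ to reduce the $y_2$-integration to shifted Gaussian moments producing the sums over $\ell$ and even $m$, and evaluate the remaining integral $\int_{\R} y_1^{2k-m}e^{-\frac{1}{2}\Delta y_1^2+\iota y_1 x}\,dy_1$ via the Hermite expansion to generate the sum over even $n$. Your sign bookkeeping ($(-1)^{2k}=1$, $n$ even) correctly recovers $c_{k,\ell,m,n}$, so the argument is complete.
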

\begin{proof}  Let $L$ be the left hand side of the equality \eref{integral}. It is easy to show that
\begin{align*}
L&=\frac{(-1)^k}{2\pi}\int_{\R^2} \exp\Big\{-\frac{1}{2}y_2^2(a+2\varepsilon)-y_2 y_1(b-\varepsilon)-\frac{1}{2}y_1^2(c+\varepsilon)+\iota y_1  x\Big\}  y^{k}_2 (y_1-y_2)^{k} \, dy\\
&=\sum^k_{\ell=0}\frac{(-1)^{k+\ell}}{2\pi} \binom{k}{\ell}\int_{\R^2} \exp\Big\{-\frac{1}{2}y_2^2(a+2\varepsilon)-y_2 y_1(b-\varepsilon)-\frac{1}{2}y_1^2(c+\varepsilon)+\iota y_1  x\Big\}  y^{k+\ell}_2 y_1^{k-\ell} \, dy\\
&=\sum^k_{\ell=0}\sum^{k+\ell}_{m=0:\, \text{even}}  \frac{(-1)^{k+\ell}}{\sqrt{2\pi}}\binom{k}{\ell}  \binom{k+\ell} {m}(m-1)!! (a+2\varepsilon)^{-k-\ell-\frac{1-m}{2}} (\varepsilon-b)^{k+\ell-m} \int_{\R} y_1^{2k-m}e^{-\frac{1}{2} y_1^2\Delta+\iota y_1 x}   \, dy_1\\
&=\sum^k_{\ell=0}\sum^{k+\ell}_{m=0:\, \text{even}}\sum^{2k-m}_{n=0:\, \text{even}} c_{k,\ell,m,n} (a+2\varepsilon)^{-\frac{m+1}{2}} (\frac{\varepsilon-b}{a+2\varepsilon})^{k+\ell-m}\Delta^{-(2k-m)-\frac{1-n}{2}} x^{2k-m-n}e^{-\frac{x^2}{2\Delta}}.
\end{align*}
\end{proof}

\begin{lemma} \label{lma2}  Assume that $k\in\N\cup\{0\}$ and $\varepsilon>0$. Then, for any $a_1,b_1,c_1, a_2,b_2,c_2, x\in\R$ with $a_1, a_2, c_1, c_2>0$ and $\Delta'=c_1+c_2+a_2+2b_2+\varepsilon-\frac{(b_1-b_2-a_2-\varepsilon)^2}{a_1+a_2+2\varepsilon}>0$, we have
\begin{align*}
&\frac{(-1)^k}{2\pi}\int_{\R^2} \exp\left\{-\frac{1}{2}\big([y_2^2a_1+2y_2 y_1b_1+y_1^2c_1]+[(y_1-y_2)^2a_2+2(y_1-y_2)y_1b_2+y^2_1c_2]\big)\right\}\\
&\qquad\qquad\times \exp\Big\{-\frac{\varepsilon}{2}((y_1-y_2)^2+y_2^2)+\iota y_1x\Big\}  y^{k}_2 (y_1-y_2)^{k} \, dy\\
&=\sum^k_{\ell=0}\sum^{k+\ell}_{m=0:\text{even}}\sum^{2k-m}_{n=0:\text{even}} c_{k,\ell,m,n} (a_1+a_2+2\varepsilon)^{-\frac{m+1}{2}} (\frac{\varepsilon+b_2+a_2-b_1}{a_1+a_2+2\varepsilon})^{k+\ell-m}(\Delta')^{-(2k-m)-\frac{1-n}{2}} x^{2k-m-n}e^{-\frac{x^2}{2\Delta'}},
\end{align*}
where
\[
c_{k,\ell,m,n}=(-1)^{\ell-\frac{m+n}{2}} \binom{k}{\ell}  \binom{k+\ell} {m}\binom{2k-m} {n}(m-1)!! (n-1)!!
\]
and we use the convention $0^0=1$ for the case $x=0\in\R$.
\end{lemma}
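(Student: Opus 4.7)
The plan is to deduce Lemma~\ref{lma2} directly from Lemma~\ref{lma1} by recognizing that the quadratic form appearing in the exponent of Lemma~\ref{lma2} can be repackaged into the form treated in Lemma~\ref{lma1}. Concretely, I would expand the bracketed piece
\[
(y_1-y_2)^2 a_2+2(y_1-y_2)y_1 b_2+y_1^2 c_2
= y_2^2 a_2 - 2 y_1 y_2(a_2+b_2) + y_1^2(a_2+2b_2+c_2),
\]
add it to $y_2^2 a_1+2 y_2 y_1 b_1+y_1^2 c_1$, and collect like terms to obtain a single quadratic
\[
y_2^2\, a + 2 y_1 y_2\, b + y_1^2\, c, \qquad a := a_1+a_2,\q b := b_1-a_2-b_2,\q c := c_1+c_2+a_2+2b_2.
\]

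With this identification, the whole integrand agrees with the integrand of Lemma~\ref{lma1}, so I would just cite that lemma and make the substitutions: $a+2\varepsilon \mapsto a_1+a_2+2\varepsilon$, $\varepsilon-b \mapsto \varepsilon+b_2+a_2-b_1$, and
\[
\Delta = c+\varepsilon-\tfrac{(b-\varepsilon)^2}{a+2\varepsilon}
\mapsto c_1+c_2+a_2+2b_2+\varepsilon-\tfrac{(b_1-b_2-a_2-\varepsilon)^2}{a_1+a_2+2\varepsilon}=\Delta'.
\]
Note that $(b-\varepsilon)^2=(b_1-a_2-b_2-\varepsilon)^2=(b_1-b_2-a_2-\varepsilon)^2$, so the value of $\Delta$ really does become $\Delta'$; this is the only place one has to be slightly careful about signs. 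The hypotheses $a_1,a_2>0$ and $\Delta'>0$ ensure the positivity conditions ($a>0$ and $\Delta>0$) of Lemma~\ref{lma1} are met, so the conclusion applies verbatim and produces exactly the claimed right-hand side with the same coefficients $c_{k,\ell,m,n}$.

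Because the proof is a straight reduction, there is no real obstacle beyond bookkeeping; the only point that merits a short sentence is verifying the algebraic identity for $\Delta'$ and checking that $(b-\varepsilon)^2$ transforms into the stated square. I would therefore write the proof as essentially a one-paragraph reduction: display the quadratic-form rearrangement, invoke Lemma~\ref{lma1} with the substitutions above, and remark on the sign identity that turns $\Delta$ into $\Delta'$.
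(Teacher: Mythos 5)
Your reduction is exactly the paper's proof: the authors likewise expand the second quadratic, collect terms to rewrite the exponent as $-\frac{1}{2}y_2^2(a_1+a_2+2\varepsilon)-y_2y_1(b_1-a_2-b_2-\varepsilon)-\frac{1}{2}y_1^2(c_1+c_2+a_2+2b_2+\varepsilon)+\iota y_1 x$, and then invoke Lemma~\ref{lma1}. Your bookkeeping (the identifications of $a$, $b$, $c$ and the verification that $\Delta$ becomes $\Delta'$) is correct, so the proposal is fine as written.
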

\begin{proof}  Note that the integral in the above statement can be written as
\begin{align*}
\int_{\R^2} \exp\Big\{-\frac{1}{2}y_2^2(a_1+a_2+2\varepsilon)-y_2 y_1(b_1-a_2-b_2-\varepsilon)-\frac{1}{2}y_1^2(c_1+c_2+a_2+2b_2+\varepsilon)+\iota y_1  x\Big\}  y^{k}_2 (y_1-y_2)^{k} \, dy.
\end{align*}
Then the desired result follows from Lemma \ref{lma1}.
\end{proof}

\begin{lemma} \label{lma3}   Assume that $k\in\N\cup\{0\}$ and $\varepsilon>0$. Then, for any $a,b,c\in\R$ with $a,c>0$ and $(a+\varepsilon)(c+\varepsilon)-b^2>0$, we have
\begin{align*}
&\frac{(-1)^k}{2\pi}\int_{\R^2} \exp\Big\{-\frac{1}{2}(y_2^2a+2y_2 y_1b+y_1^2c)-\frac{\varepsilon}{2}(y_2^2+y_1^2)\Big\}  y^{k}_2 y_1^{k} \, dy\\
&\qquad\qquad\qquad\qquad\qquad\qquad=\sum^k_{\ell=0, \text{even}}    \frac{c_{k,\ell} \,b^{k-\ell}}{((a+\varepsilon)(c+\varepsilon)-b^2)^{\frac{2k-\ell+1}{2}}},
\end{align*}
where $c_{k,\ell}=(\ell-1)!!  \binom{k}{\ell} (2k-\ell-1)!!$.
\end{lemma}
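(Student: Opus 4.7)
The plan is to carry out the double Gaussian integral directly by integrating $y_2$ first; since $a+\varepsilon>0$, one can complete the square cleanly, after which the remaining $y_1$-integration reduces to a standard one-variable Gaussian moment. This parallels the strategy used for Lemma \ref{lma1}, but the absence of the oscillatory factor $e^{\iota y_1 x}$ and of the asymmetric $\varepsilon$-contributions makes the computation noticeably shorter.

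First I would rewrite the quadratic form in the exponent as
\[
(a+\varepsilon)\Big(y_2 + \frac{b\, y_1}{a+\varepsilon}\Big)^{\!2} + \frac{(a+\varepsilon)(c+\varepsilon)-b^2}{a+\varepsilon}\, y_1^2,
\]
and change variables $u = y_2 + b\, y_1/(a+\varepsilon)$ to decouple the two integrations. Expanding $y_2^k = \big(u - b\, y_1/(a+\varepsilon)\big)^k$ via the binomial theorem turns the double integral into a finite sum of products of one-dimensional Gaussian moments. Applying the standard formula $\int_\R x^{2n} e^{-\alpha x^2/2}\,dx = \sqrt{2\pi}\,(2n-1)!!\,\alpha^{-(2n+1)/2}$ to both the $u$-integral (with $\alpha = a+\varepsilon$) and the $y_1$-integral (with $\alpha = ((a+\varepsilon)(c+\varepsilon)-b^2)/(a+\varepsilon)$) kills all terms with $k-\ell$ odd and evaluates the remaining ones in closed form. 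A final reindexing $\ell' = k - \ell$, together with the observation that $(-1)^k(-1)^\ell = (-1)^{k+\ell} = 1$ under the surviving parity constraint, should produce exactly the claimed formula with its coefficient $c_{k,\ell'} = (\ell'-1)!!\,\binom{k}{\ell'}\,(2k-\ell'-1)!!$.

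There is no real analytic difficulty; the only delicate point is keeping track of the various powers of $(a+\varepsilon)$. The shift contributes $(a+\varepsilon)^{-\ell}$, the $u$-moment contributes $(a+\varepsilon)^{-(k-\ell+1)/2}$, and the $y_1$-moment contributes $(a+\varepsilon)^{(k+\ell+1)/2}$ times $\big((a+\varepsilon)(c+\varepsilon)-b^2\big)^{-(k+\ell+1)/2}$. One must verify that the three $(a+\varepsilon)$-factors combine to zero net power, leaving only $b$ and $(a+\varepsilon)(c+\varepsilon)-b^2$ on the right-hand side, with the latter appearing to the power $(2k-\ell'+1)/2$ after the reindexing. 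This bookkeeping is the only place where a slip could break the identity; once it is checked, the proof is complete.
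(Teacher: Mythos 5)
Your proposal is correct and follows essentially the same route as the paper, which simply remarks that Lemma \ref{lma3} "follows from similar arguments as in the proof of Lemma \ref{lma1}" --- i.e.\ complete the square in $y_2$, expand binomially, and evaluate the two resulting one-dimensional Gaussian moments. Your power-count for $(a+\varepsilon)$ (net exponent zero) and the reindexing $\ell'=k-\ell$ both check out, so the bookkeeping you flag as the only delicate point indeed goes through.
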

\begin{proof} This follows from similar arguments as in the proof of Lemma \ref{lma1}.
\end{proof}

\section{Proof of Theorem \ref{thm1}}

In this section, we give the proof of Theorem \ref{thm1}.

\begin{proof} We divide the proof into several steps.

\noindent
{\bf Step 1.}
Recall the definition of $L^{(\mathbf{k})}_{\varepsilon}(T,x)$ in (\ref{epsilon}). Using Fourier transform,
\begin{align*}
L^{(\mathbf{k})}_{\varepsilon}(T,x)
&=\frac{\iota^{|\mathbf{k}|}}{(2\pi)^d} \int^T_0\int^T_0\int_{\R^d} e^{\iota z\cdot (X^{H_1}_u-\widetilde{X}^{H_2}_v+x)}e^{-\frac{\varepsilon|z|^2}{2}}  \prod^d_{i=1}z^{k_i}_i\, dz\, du\, dv.
\end{align*}
Hence
\begin{align*}
\E[|L^{(\mathbf{k})}_{\varepsilon}(T,x)|^2]
&=\frac{(-1)^{|\mathbf{k}|}}{(2\pi)^{2d}}\int_{[0,T]^4}\int_{\R^{2d}} e^{-\frac{1}{2}\big[\E(z_2\cdot X^{H_1}_{t_2}+z_1\cdot X^{H_1}_{t_1})^2+\E(z_2\cdot \widetilde{X}^{H_2}_{s_2}+z_1\cdot \widetilde{X}^{H_2}_{s_1})^2\big]}\\
&\qquad\qquad \times e^{-\frac{\varepsilon}{2}(|z_2|^2+|z_1|^2)+\iota(z_1+z_2)\cdot x} \prod\limits^d_{i=1}z^{k_i}_{2,i} \prod\limits^d_{i=1}z^{k_i}_{1,i}\, dz_2\, dz_1\, dt\, ds,
\end{align*}
where $z_1=(z_{1,1},\cdots,z_{1,d})$ and $z_2=(z_{2,1},\cdots,z_{2,d})$.

For $i=1,\cdots, d$, we first introduce the following notations
\begin{align*}
I_i(H,t_2,t_1,z_2,z_1)&=e^{-\frac{1}{2} \E[z_{2,i}\cdot (X^{H,i}_{t_2}-X^{H,i}_{t_1})+z_{1,i}\cdot X^{H,i}_{t_1}]^2}\\
\widetilde{I}_i(H,t_2,t_1,z_2,z_1)&=e^{-\frac{1}{2} \E[z_{2,i}\cdot (\widetilde{X}^{H,i}_{t_2}-\widetilde{X}^{H,i}_{t_1})+z_{1,i}\cdot \widetilde{X}^{H,i}_{t_1}]^2}\\
K_i(\varepsilon,z_2, z_1)&=e^{-\frac{\varepsilon}{2}(z^2_{2,i}+z^2_{1,i})+\iota(z_{1,i}+z_{2,i})x_i}z^{k_i}_{2,i}z^{k_i}_{1,i}.
\end{align*}
Then we define
\begin{align*}
F_1(t_2,t_1, s_2,s_1, x_i)&=\frac{(-1)^{k_i}}{2\pi}\int_{\R^2}I_i(H_1,t_2,t_1,z_2,z_1+z_2)\widetilde{I}_i(H_2,s_2,s_1,z_2,z_1+z_2)K_i(\varepsilon,z_2, z_1) \, dz_{2,i}\, dz_{1,i}\\
F_2(t_2,t_1, s_2,s_1, x_i)&=\frac{(-1)^{k_i}}{2\pi}\int_{\R^2}I_i(H_1,t_2,t_1,z_2,z_1+z_2)\widetilde{I}_i(H_2,s_2,s_1,z_1,z_1+z_2)K_i(\varepsilon,z_2, z_1)\, dz_{2,i}\, dz_{1,i}.
\end{align*}
Now we can obtain that
\begin{align}  \label{e1}
\E\Big[ |L^{(\mathbf{k})}_{\varepsilon}(T,x)|^2\Big]&=\frac{2}{(2\pi)^d}\left[\int_{D}\prod^d_{i=1}F_1(t_2,t_1, s_2,s_1, x_i)\, dt\, ds+\int_{D}\prod^d_{i=1}F_2(t_2,t_1, s_2,s_1, x_i)\, dt\, ds\right] \nonumber \\
&=:\frac{2}{(2\pi)^d}(I_1(\varepsilon)+I_2(\varepsilon)),
\end{align}
where $D=\{0<s_1<t_1<T, 0<s_2<t_2<T\}$.

\noindent
{\bf Step 2.}  We estimate $\liminf\limits_{\varepsilon\downarrow 0}\frac{I_1(\varepsilon)}{h^{d,|{\bf k}|}_{H_1,H_2}(\varepsilon)}$. Note that
\[
I_1(\varepsilon)=\int_{D}\prod\limits^d_{i=1}F_1(t_2,t_1, s_2,s_1, x_i)\, dt\, ds.
\]
Making the change of variables $y_{2}=z_{2}$ and $y_1=z_1+z_2$ gives
\begin{align*}
F_1(t_2,t_1, s_2,s_1, x_i)&=\frac{(-1)^{k_i}}{2\pi}\int_{\R^2}I_i(H_1,t_2,t_1,y_2,y_1)\widetilde{I}_i(H_2,s_2,s_1,y_2,y_1)K_i(\varepsilon, y_2, y_1-y_2) \, dy_{2,i}\, dy_{1,i}.
\end{align*}
In order to calculate the above integral,  we set
\begin{align*}
a&=\E[(X^{H_1,1}_{t_2}-X^{H_1,1}_{t_1})^2]+\E[(\widetilde{X}^{H_2,1}_{s_2}-\widetilde{X}^{H_2,1}_{s_1})^2]\\
b&=\E[(X^{H_1,1}_{t_2}-X^{H_1,1}_{t_1})X^{H_1,1}_{t_1}]+\E[(\widetilde{X}^{H_2,1}_{s_2}-\widetilde{X}^{H_2,1}_{s_1})\widetilde{X}^{H_2,1}_{s_1}]\\
c&=\E[(X^{H_1,1}_{t_1})^2]+\E[(\widetilde{X}^{H_2,1}_{s_1})^2]\\
\Delta&=c+\varepsilon-\frac{(b-\varepsilon)^2}{a+2\varepsilon}.
\end{align*}

For any $(t_2,t_1,s_2,s_1)\in D$, it is easy to see that $a,b,c>0$ and
\begin{align*}
\Delta=\frac{ac+a\varepsilon+2c\varepsilon+\varepsilon^2-b^2+2b\varepsilon}{a+2\varepsilon}\geq \frac{c\varepsilon+\varepsilon^2}{a+2\varepsilon}>0,
\end{align*}
where we use $|b|\leq \sqrt{ac}\leq \frac{a+c}{2}$ in the first inequality.

By Lemma \ref{lma1}, $F_1(t_2,t_1, s_2,s_1, x_i)$ equals
\begin{align*}
\sum^{k_i}_{\ell=0}\sum^{k_i+\ell}_{m=0:\text{even}}\sum^{2k_i-m}_{n=0:\text{even}} c_{k_i,\ell,m,n}  (a+2\varepsilon)^{-\frac{m+1}{2}} (\frac{\varepsilon-b}{a+2\varepsilon})^{k_i+\ell-m}\Delta^{-(2k_i-m)-\frac{1-n}{2}} x^{2k_i-m-n}_ie^{-\frac{x^2_i}{2\Delta}},
\end{align*}
where $c_{k_i,\ell,m,n}=(-1)^{\ell-\frac{m+n}{2}} \binom{k_i}{\ell}  \binom{k_i+\ell} {m}\binom{2k_i-m} {n}(m-1)!! (n-1)!!$.

For any $\gamma>1$ and $(t_2,t_1,s_2,s_1)\in D$,  using the Cauchy-Schwartz inequality and properties {\bf (P1)} and {\bf (P2)}, we can show that
\begin{align*}
\big|\E[(X^{H_1,1}_{t_2}-X^{H_1,1}_{t_1})X^{H_1,1}_{t_1}]\big|\leq c_1\big[(\gamma^{H_1}+\gamma^{-H_1})a+\beta(\gamma)a^{\frac{1}{2}}\big],
\end{align*}
where $\gamma^{H_1}a$ comes from the case $\frac{1}{\gamma}<\frac{t_2-t_1}{t_1}<\gamma$,  $\gamma^{-H_1}a$ from the case $\frac{t_2-t_1}{t_1}\geq \gamma$, and $\beta(\gamma)a^{\frac{1}{2}}$ from the case $\frac{t_2-t_1}{t_1}\leq \frac{1}{\gamma}$. Similarly,
\begin{align*}
\big|\E[(\widetilde{X}^{H_2,1}_{s_2}-\widetilde{X}^{H_2,1}_{s_1})\widetilde{X}^{H_2,1}_{s_1}]\big|\leq c_2\big[(\gamma^{H_1}+\gamma^{-H_1})a+\beta(\gamma)a^{\frac{1}{2}}\big].
\end{align*}
Hence
\begin{align*}
\Big|\frac{\varepsilon-b}{a+2\varepsilon}\Big|
&\leq \frac{1}{2}+c_3\frac{(\gamma^{H_1}+\gamma^{-H_1}+\gamma^{H_2}+\gamma^{-H_2})a+\beta(\gamma)a^{\frac{1}{2}}}{a+2\varepsilon}\leq c_4\Big(\gamma^{H_1}+\gamma^{H_2}+\frac{\beta(\gamma)}{(a+2\varepsilon)^{\frac{1}{2}}}\Big).
\end{align*}

Let \begin{align} \label{dg}
D_{\gamma}=D\cap\Big\{0<\frac{t_2-t_1}{t_1}<\frac{T\wedge 1}{2\gamma}, \frac{T}{4}<t_1<\frac{T}{2}, 0<\frac{s_2-s_1}{s_1}<\frac{T\wedge 1}{2\gamma}, \frac{T}{4}<s_1<\frac{T}{2} \Big\}.
\end{align}
For any $(t_2,t_1,s_2,s_1)\in D_{\gamma}$, using the property {\bf (P2)}, we can show that
\begin{align} \label{det}
c_5(T^{2H_1}+T^{2H_2})\leq \Delta\leq c_6(T^{2H_1}+T^{2H_2})
\end{align}
provided that $\gamma$ is very large and $\varepsilon$ is very small.

Note that for any $\alpha>0$, the function $h(w)=\frac{1}{w^{\alpha}}e^{-\frac{1}{w}}\in(0,\alpha^{\alpha}e^{-\alpha}]$ when $w\in(0,+\infty)$. Choosing $\gamma$ large enough gives
\begin{align*}
\liminf_{\varepsilon\downarrow 0}\frac{I_1(\varepsilon)}{h^{d,|{\bf k}|}_{H_1,H_2}(\varepsilon)}
&\geq \liminf_{\varepsilon\downarrow 0} \frac{(1-c_4\beta(\gamma))^d}{h^{d,|{\bf k}|}_{H_1,H_2}(\varepsilon)}\int_{D} (a+2\varepsilon)^{-|{\bf k}|-\frac{d}{2}}\, \Delta^{-\frac{d}{2}}e^{-\frac{|x|^2}{2\Delta}}dt\, ds,
\end{align*}
where $0<c_4\beta(\gamma)<1$.

Using the inequality (\ref{det}), we can obtain that
\begin{align} \label{e2}
&\liminf_{\varepsilon\downarrow 0}\frac{I_1(\varepsilon)}{h^{d,|{\bf k}|}_{H_1,H_2}(\varepsilon)} \nonumber \\
&\geq c_7\liminf_{\varepsilon\downarrow 0} \frac{(1-c_4\beta(\gamma))^d}{h^{d,|{\bf k}|}_{H_1,H_2}(\varepsilon)}\int_{D_{\gamma}} (a+2\varepsilon)^{-|{\bf k}|-\frac{d}{2}}\, (T^{2H_1}+T^{2H_2})^{-\frac{d}{2}}e^{-\frac{|x|^2}{2c_5(T^{2H_1}+T^{2H_2})}}dt\, ds \nonumber \\
&\geq c_8e^{-\frac{|x|^2}{2c_5(T^{2H_1}+T^{2H_2})}} \liminf_{\varepsilon\downarrow 0} \frac{(1-c_4\beta(\gamma))^d}{h^{d,|{\bf k}|}_{H_1,H_2}(\varepsilon)}\int_{D_{\gamma}} ((t_2-t_1)^{2H_1}+(s_2-s_1)^{2H_2}+2\varepsilon)^{-|{\bf k}|-\frac{d}{2}}dt\, ds \nonumber \\
&\geq c_9e^{-\frac{|x|^2}{2c_5(T^{2H_1}+T^{2H_2})}},
\end{align}
where in the last inequality we use Lemma {\bf A.3.} in \cite{hx} and the property {\bf (P1)}.

\noindent
{\bf Step 3.}  We estimate $\liminf\limits_{\varepsilon\downarrow 0}\frac{I_2(\varepsilon)}{h^{d,|{\bf k}|}_{H_1,H_2}(\varepsilon)}$. Note that
\[
I_2(\varepsilon)=\int_{D}\prod\limits^d_{i=1}F_2(t_2,t_1, s_2,s_1, x_i)\, dt\, ds.
\]
Making the change of variables $y_{2}=z_{2}$ and $y_1=z_1+z_2$ gives
\begin{align*}
F_2(t_2,t_1, s_2,s_1, x_i)&=\frac{(-1)^{k_i}}{2\pi}\int_{\R^2}I_i(H_1,t_2,t_1,y_2,y_1)\widetilde{I}_i(H_2,s_2,s_1,y_1-y_2,y_1)K_i(\varepsilon, y_2, y_1-y_2) \, dy_{2,i}\, dy_{1,i}.
\end{align*}
In order to calculate the above integral,  we set
\begin{align*}
a_1&=\E[(X^{H_1,1}_{t_2}-X^{H_1,1}_{t_1})^2],\; a_2=\E[(\widetilde{X}^{H_2,1}_{s_2}-\widetilde{X}^{H_2,1}_{s_1})^2],\\
b_1&=\E[(X^{H_1,1}_{t_2}-X^{H_1,1}_{t_1})X^{H_1,1}_{t_1}],\; b_2=\E[(\widetilde{X}^{H_2,1}_{s_2}-\widetilde{X}^{H_2,1}_{s_1})\widetilde{X}^{H_2,1}_{s_1}]\\
e_1&=\E[(X^{H_1,1}_{t_1})^2],\; e_2=\E[(\widetilde{X}^{H_2,1}_{s_1})^2]\\
\Delta'&=e_1+e_2+a_2+2b_2+\varepsilon-\frac{(b_1-b_2-a_2-\varepsilon)^2}{a_1+a_2+2\varepsilon}.
\end{align*}
It is easy to see that, for any $(t_2,t_1,s_2,s_1)\in D$, $a_1, a_2, e_1,e_2>0$ and
\begin{align*}
\Delta'&\geq \frac{a_1e_2+a_1a_2+a_2e_1+2a_1b_2+2a_2b_1+2b_1b_2+\varepsilon(e_1+e_2)+\varepsilon^2}{a_1+a_2+2\varepsilon}\geq \frac{\varepsilon(e_1+e_2)+\varepsilon^2}{a_1+a_2+2\varepsilon}>0,
\end{align*}
where we use $|b_1|\leq \sqrt{a_1e_1}\leq \frac{a_1+e_1}{2}$ and $|b_2|\leq \sqrt{a_2e_2}\leq \frac{a_2+e_2}{2}$ in the first two inequalities.

By Lemma \ref{lma1}, $F_2(t_2,t_1, s_2,s_1, x_i)$ equals
\begin{align*}
\sum^{k_i}_{\ell=0}\sum^{k_i+\ell}_{m=0:\text{even}}\sum^{2k_i-m}_{n=0:\text{even}} c_{k_i,\ell,m,n} (a_1+a_2+2\varepsilon)^{-\frac{m+1}{2}} (\frac{\varepsilon+b_2+a_2-b_1}{a_1+a_2+2\varepsilon})^{k_i+\ell-m}(\Delta')^{-(2k_i-m)-\frac{1-n}{2}} x^{2k_i-m-n}e^{-\frac{x^2}{2\Delta'}},
\end{align*}
where $c_{k_i,\ell,m,n}=(-1)^{\ell-\frac{m+n}{2}} \binom{k_i}{\ell}  \binom{k_i+\ell} {m}\binom{2k_i-m} {n}(m-1)!! (n-1)!!$.

For any $(t_2,t_1,s_2,s_1)\in D$, using the Cauchy-Schwartz inequality and properties {\bf (P1)} and {\bf (P2)}, we can show that
\begin{align*}
\Big|\frac{\varepsilon+b_2+a_2-b_1}{a_1+a_2+2\varepsilon}\Big|
&\leq 1+\frac{|b_2-b_1|}{a_1+a_2+2\varepsilon}\leq c_{10}\Big(\gamma^{H_1}+\gamma^{H_2}+\frac{\beta(\gamma)}{(a_1+a_2+2\varepsilon)^{\frac{1}{2}}}\Big).
\end{align*}
Therefore,
\begin{align*}
\liminf_{\varepsilon\downarrow 0}\frac{I_2(\varepsilon)}{h^{d,|{\bf k}|}_{H_1,H_2}(\varepsilon)}
&\geq \liminf_{\varepsilon\downarrow 0} \frac{1}{h^{d,|{\bf k}|}_{H_1,H_2}(\varepsilon)}\int_{D} (a_1+a_2+2\varepsilon)^{-|{\bf k}|-\frac{d}{2}}\, \Delta^{-\frac{d}{2}}e^{-\frac{|x|^2}{2\Delta}}dt\, ds\\
&\qquad\qquad-c_{11}\beta(\gamma)\limsup_{\varepsilon\downarrow 0} \frac{1}{h^{d,|{\bf k}|}_{H_1,H_2}(\varepsilon)}\int_{D} (a_1+a_2+2\varepsilon)^{-|{\bf k}|-\frac{d}{2}}\, dt\, ds\\
&\geq -c_{12}\beta(\gamma)\limsup_{\varepsilon\downarrow 0} \frac{1}{h^{d,|{\bf k}|}_{H_1,H_2}(\varepsilon)}\int_{D} ((t_2-t_1)^{2H_1}+(s_2-s_1)^{2H_2}+2\varepsilon)^{-|{\bf k}|-\frac{d}{2}}\,dt\, ds\\
&\geq -c_{13}\beta(\gamma).
\end{align*}
Letting $\gamma\uparrow+\infty$ gives
\begin{align} \label{e3}
\liminf_{\varepsilon\downarrow 0}\frac{I_2(\varepsilon)}{h^{d,|{\bf k}|}_{H_1,H_2}(\varepsilon)}\geq 0.
\end{align}

\noindent
{\bf Step 3.}  Combining \eref{e1}, \eref{e2} and \eref{e3} gives
\begin{align*}
\liminf_{\varepsilon\downarrow 0}\frac{\E[|L^{(\mathbf{k})}_{\varepsilon}(T,x)|^2]}{h^{d,|{\bf k}|}_{H_1,H_2}(\varepsilon)}
&\geq \liminf_{\varepsilon\downarrow 0}\frac{I_1(\varepsilon)}{h^{d,|{\bf k}|}_{H_1,H_2}(\varepsilon)}+\liminf_{\varepsilon\downarrow 0}\frac{I_2(\varepsilon)}{h^{d,|{\bf k}|}_{H_1,H_2}(\varepsilon)}\geq c_{14}e^{-\frac{|x|^2}{2c_{5}(T^{2H_1}+T^{2H_2})}}.
\end{align*}
This completes the proof.
\end{proof}

\section{Proof of Theorem \ref{thm2}}

In this section, we give the proof of Theorem \ref{thm2}.

\begin{proof}
By Lemma \ref{lma3},
\begin{align*}
\E[|L^{(\mathbf{k})}_{\varepsilon}(T,0)|^2]
&=\frac{(-1)^{|\mathbf{k}|}}{(2\pi)^{2d}}\int_{[0,T]^4}\int_{\R^{2d}} e^{-\frac{1}{2}\big[\E(z_2\cdot X^{H_1}_{t_2}+z_1\cdot X^{H_1}_{t_1})^2+\E(z_2\cdot \widetilde{X}^{H_2}_{s_2}+z_1\cdot \widetilde{X}^{H_2}_{s_1})^2\big]}\\
&\qquad\qquad \times e^{-\frac{\varepsilon}{2}(|z_2|^2+|z_1|^2)} \prod\limits^d_{i=1}z^{k_i}_{2,i} \prod\limits^d_{i=1}z^{k_i}_{1,i}\, dz_2\, dz_1\, dt\, ds\\
&=\frac{1}{(2\pi)^d}\int_{[0,T]^4} \prod^d_{i=1}\Big(\sum^{k_i}_{\ell=0, \text{even}}    \frac{c_{k_i,\ell} \,b^{k_i-\ell}}{((a+\varepsilon)(c+\varepsilon)-b^2)^{\frac{2k_i-\ell+1}{2}}}
\Big)dt\, ds,
\end{align*}
where $a=\E[(X^{H_1,1}_{t_2})^2]+\E[(\widetilde{X}^{H_2,1}_{s_2})^2]$, $b=\E[X^{H_1,1}_{t_2}X^{H_1,1}_{t_1}]+\E[\widetilde{X}^{H_2,1}_{s_2}\widetilde{X}^{H_2,1}_{s_1}]$ and $c=\E[(\widetilde{X}^{H_2,1}_{s_1})^2]+\E[(X^{H_1,1}_{t_1})^2]$. According to the assumption (i) $|\mathbf{k}|$ is even or  (ii) $\E[X^{H_1,1}_{t}X^{H_1,1}_{s}]\geq 0$ and  $\E[\widetilde{X}^{H_2,1}_{t}\widetilde{X}^{H_2,1}_{s}]\geq 0$ for any  $0<s,t<T$,
\begin{align*}
\E[|L^{(\mathbf{k})}_{\varepsilon}(T,0)|^2]
&\geq \frac{1}{(2\pi)^d}\prod^d_{i=1} c_{k_i,0}\int_{[0,T]^4} \frac{b^{|\mathbf{k}|}}{\Big((a+\varepsilon)(c+\varepsilon)-b^2\big)^{\frac{2|\mathbf{k}|+d}{2}}}\, dt\, ds.
\end{align*}

Recall the definition of $D_{\gamma}$ in \eref{dg}. For any $(t_2,t_1,s_2,s_1)\in D_{\gamma}$ with $\gamma$ large enough, by the property {\bf (P1)}  and the Cauchy Schwartz inequality,
\begin{align*}
b&=\E[(X^{H_1,1}_{t_1})^2]+\E[(\widetilde{X}^{H_2,1}_{s_1})^2]+\E[(X^{H_1,1}_{t_2}-X^{H_1,1}_{t_1})X^{H_1,1}_{t_1}]+\E[(\widetilde{X}^{H_2,1}_{s_2}-\widetilde{X}^{H_2,1}_{s_1})\widetilde{X}^{H_2,1}_{s_1}]\\
&\geq c_1(t^{2H_1}_1+s^{2H_2}_1-\frac{t^{2H_1}_1}{\gamma^{H_1}}-\frac{s^{2H_1}_1}{\gamma^{H_2}})\\
&\geq c_2(t^{2H_1}_1+s^{2H_2}_1).
\end{align*}
Let $a_{22}=\E[(X^{H_1,1}_{t_2}-X^{H_1,1}_{t_1})^2]+\E[(\widetilde{X}^{H_2,1}_{s_2}-\widetilde{X}^{H_2,1}_{s_1})^2]$, $a_{11}=\E[(\widetilde{X}^{H_2,1}_{s_1})^2]+\E[(X^{H_1,1}_{t_1})^2]$ and $a_{12}=\E[(X^{H_1,1}_{t_2}-X^{H_1,1}_{t_1})X^{H_1,1}_{t_1}]+\E[(\widetilde{X}^{H_2,1}_{s_2}-\widetilde{X}^{H_2,1}_{s_1})\widetilde{X}^{H_2,1}_{s_1}]$. Then
\begin{align*}
0<(a+\varepsilon)(c+\varepsilon)-b^2
&=(a_{22}+2a_{12}+a_{11}+\varepsilon)(a_{11}+\varepsilon)-(a_{12}+a_{11})^2\leq 2(a_{22}+\varepsilon)(a_{11}+\varepsilon).
\end{align*}
Therefore, for $\gamma$ large enough,
\begin{align*}
&\liminf_{\varepsilon\downarrow 0}\frac{\E[|L^{(\mathbf{k})}_{\varepsilon}(T,0)|^2]}{h^{d,|{\bf k}|}_{H_1,H_2}(\varepsilon)}\\
&\geq \liminf_{\varepsilon\downarrow 0}\frac{c_3}{h^{d,|{\bf k}|}_{H_1,H_2}(\varepsilon)}  \int_{[0,T]^4} \frac{b^{|\mathbf{k}|}}{\Big((a+\varepsilon)(c+\varepsilon)-b^2\big)^{\frac{2|\mathbf{k}|+d}{2}}}\, dt\, ds\\
&\geq \liminf_{\varepsilon\downarrow 0}\frac{c_4}{h^{d,|{\bf k}|}_{H_1,H_2}(\varepsilon)}  \int_{D_{\gamma}} \frac{(t^{2H_1}_1+s^{2H_2}_1)^{|\mathbf{k}|}}{((t_2-t_1)^{2H_1}+(s_2-s_1)^{2H_2}+\varepsilon)^{|\mathbf{k}|+\frac{d}{2}}(t^{2H_1}_1+s^{2H_2}_1+\varepsilon)^{|\mathbf{k}|+\frac{d}{2}}}\, dt\, ds\\
&\geq c_5,
\end{align*}
where in the last inequality we use Lemma {\bf A.3.} in \cite{hx} and the property {\bf (P1)}.
\end{proof}

$\begin{array}{cc}
\begin{minipage}[t]{1\textwidth}
{\bf Minhao Hong}\\
School of Statistics, East China Normal University, Shanghai 200262, China \\
\texttt{hongmhecnu@foxmail.com}
\end{minipage}
\hfill
\end{array}$

$\begin{array}{cc}
\begin{minipage}[t]{1\textwidth}
{\bf Fangjun Xu}\\
Key Laboratory of Advanced Theory and Application in Statistics and Data Science - MOE, School of Statistics, East China Normal University, Shanghai, 200062, China \\
NYU-ECNU Institute of Mathematical Sciences at NYU Shanghai, 3663 Zhongshan Road North, Shanghai, 200062, China\\
\texttt{fangjunxu@gmail.com, fjxu@finance.ecnu.edu.cn}
\end{minipage}
\hfill
\end{array}$


\begin{thebibliography}{99}

\bibitem{bgt}T. Bojdeckia, L. Gorostizab and A. Talarczyk: Sub-fractional Brownian motion and its relation to occupation times. \textit{Statistics \& Probability Letters}, {\bf 69}, 405--419, 2004.

\bibitem{ghx} J. Guo, Y. Hu and Y. Xiao: High-order derivative of intersection local time for two independent fractional Brownian motions. \textit{J. Theor.  Probab.}, {\bf 32}, 1190--1201, 2019.

\bibitem{hx}  M. Hong and F. Xu: Derivatives of local times for some Gaussian fields. \textit{J. Math. Anal. Appl.}, {\bf 484}, 123716, 2020.

\bibitem{hv} C. Houdr\'{e} and J. Villa: An example of infinite dimensional quasi-helix. In Stochastic Models (Mexico City, 2002),  \textit{Contemp. Math.}, {\bf 336},  Amer. Math. Soc., Providence, RI, 195--201, 2003.

\bibitem{jnp} A. Jaramillo, I. Nourdin and G. Peccati: Approximation of fractional local times zero energy and weak derivatives. arXiv: 1903.08683.

\bibitem{no}  D. Nualart and S. Ortiz-Latorre: Intersection local time for two independent fractional Brownian motions. \textit{J. Theor. Probab.}, {\bf 20}, 759--757, 2007.

\bibitem{sxy} J. Song, F. Xu and Q. Yu: Limit theorems for functionals of two independent Gaussian processes. \textit{Stochastic Process. Appl.}, {\bf 129}, 4791--4836, 2019.

\bibitem{wx} D. Wu and Y. Xiao: Regularity of intersection local times of fractional Brownian motions.  \textit{J. Theor. Probab.}, {\bf 23}(4), 972--1001, 2010.

\end{thebibliography}
\end{document}